\documentclass[12pt]{amsart}
\usepackage{amssymb} 
\newcommand{\diff}{\operatorname{Diff}}
\usepackage{hyperref}
\newcommand{\eps}{\varepsilon}
\renewcommand{\le}{\operatorname{\text{LE}}}
\newcommand{\Z}{\mathbb{Z}}
\newcommand{\N}{\mathbb{N}}
\newcommand{\R}{\mathbb{R}}
\newcommand{\pb}{\operatorname{Pb}}
\newcommand{\phc}{\operatorname{Phc}}
\newcommand{\nuh}{\operatorname{Nuh}}
\newcommand{\length}{\operatorname{length}}

\newcommand{\peri}{\operatorname{per}}

\newcommand{\zeroeq}{\stackrel{\rm{o}}{=}}
\newcommand{\transv}{\pitchfork}

\newtheorem{theorem}{Theorem}[section]
\newtheorem{claim}[theorem]{Claim}
\newtheorem{definition}[theorem]{Definition}
\newtheorem{proposition}[theorem]{Proposition}
\newtheorem{lemma}[theorem]{Lemma}

\newtheorem{corollary}[theorem]{Corollary}
\newtheorem{remark}[theorem]{Remark}

\title[A bound for internal radii]{A bound for internal radii of stable manifolds in terms of Lyapunov exponents}
\author{Jana Rodriguez Hertz}
\address{SUSTech, 1088 Xueyuan Road, Shenzhen, China.}
\email{rhertz@sustech.edu.cn}
\thanks{The author has been supported by the NSFC11871262, NSFC11871394, and NSFC12250710130 funds.}
\begin{document}
\begin{abstract}
We find some bounds for the internal radii of stable and unstable manifolds of points in terms of their Lyapunov exponents under the assumption of the existence of a dominated splitting. 
\end{abstract}
 \maketitle
 \section{Introduction}
 Let $M$ be a closed connected Riemannian manifold. Let $f\in\diff^{1+}(M)$. For any given invertible operator $A$, let us define its {\em conorm} $m(A)$ by $\|A^{-1}\|^{-1}$.
We say that $f$ admits a $\gamma$-dominated splitting with $\gamma>0$ if there is a $Df$-invariant splitting $TM= E^{-}\oplus E^{+}$ such that:
 
\begin{equation}\label{DS}
 \|Df_{-}(x)\|<e^{-2\gamma} m(Df_{+}(x)), 
\end{equation}
where $Df_\pm(x)=Df(x)|_{E^\pm}$.\par
The diffeomorphism $f$ has a dominated splitting if it admits a $\gamma$-dominated splitting for some $\gamma>0$.\par

We define the following exponents:
$$LE_{-}(x)=\limsup_{n\to \infty}\frac{1}{n}\sum_{k=0}^{n-1}\log \|Df_{-}(f^{k}(x))\|$$
and 
$$LE_{+}(x)=\liminf_{n\to\infty}\frac{1}{n}\sum_{k=0}^{n-1}\log m(Df_{+}(f^{k}(x)))$$

We define the Pesin stable and unstable manifolds by:
$$W^{-}(x)=\{y\in M:\limsup_{n\to \infty}\frac{1}{n}\log d(f^{n}(x), f^{n}(y))<0\}$$
and
$$W^{+}(x)=\{y\in M:\limsup_{n\to \infty}\frac{1}{n}\log d(f^{-n}(x), f^{-n}(y))<0\}$$
For any $x\in M$, we define the {\em maximal internal radius} of $W^\pm(x)$ by
$$R_\pm(x)=\inf\{\length(\alpha):\alpha(0)=x,\,\alpha(1)\in\partial W^\pm(x),\,\alpha(t)\in W^\pm(x)\,\forall t\in[0,1)\}.$$
A function $\phi:M\to\R$ is $(C,\alpha)$-H\"older if
$$|\phi(x)-\phi(y)|\leq Cd(x,y)^\alpha.$$

We find the following bound for the internal radius:
\begin{theorem}\label{internal.radius.x}
    Let $f\in\diff^{1+}$ be a diffeomorphism admitting a dominated splitting. Let $\mu$ be an invariant measure. Then, $\mu$-almost every $x$,
    \begin{equation}\label{inequality.LE}
\liminf_{N\to\infty}\frac{1}{N}\sum_{k=0}^{N-1}R_\pm(f^{k}(x))\geq\left(\frac{|\le_\pm(x)|}{C}\right)^\frac{1}{\alpha} 
\end{equation}
where $\|Df_-(x)\|$ and $m(Df_+(x))$ are $(C,\alpha)$-H\"older.
\end{theorem}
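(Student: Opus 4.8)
\medskip
\noindent\textbf{Plan of proof.} I treat the ``$-$'' case; the ``$+$'' case is analogous, obtained by running the same argument for $f^{-1}$, for which $E^{+}\oplus E^{-}$ is a $\gamma$-dominated splitting (the required inequality $\|Df^{-1}|_{E^{+}}\|<e^{-2\gamma}\,m(Df^{-1}|_{E^{-}})$ is exactly \eqref{DS}) and $W^{-}_{f^{-1}}=W^{+}_{f}$. Fix an invariant $\mu$. Since $\log\|Df_{-}\|$ is continuous, hence bounded, on the compact $M$, a bounded shift does not affect the $\limsup$ of Cesàro averages, so $\le_{-}$ is $f$-invariant; moreover by Birkhoff's theorem the $\limsup$ defining $\le_{-}(x)$ is a genuine limit for $\mu$-a.e.\ $x$. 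If $\le_{-}(x)\geq 0$ then \eqref{inequality.LE} is trivial: the right-hand side vanishes when $\le_{-}(x)=0$, and when $\le_{-}(x)>0$ domination forces $W^{-}(f^{k}x)$ to be trivial near $f^{k}x$, hence to have no frontier, so $R_{-}(f^{k}x)=\infty$. So assume $\le_{-}(x)<0$. As $\le_{-}$ is constant along the orbit, it is enough to prove the pointwise bound $R_{-}(f^{k}x)\geq(|\le_{-}(x)|/C)^{1/\alpha}$ for every $k$, and then average and take $\liminf$ to get \eqref{inequality.LE}.

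\smallskip
The heart of the matter is a quantitative local-stable-manifold estimate. I would use the Hirsch--Pugh--Shub plaque family: locally $f$-invariant embedded discs $\mathcal{W}^{-}_{\delta}(y)$ tangent to a cone about $E^{-}$ that can be taken as thin as desired, of some uniform radius $\delta\leq\delta_{0}$. Fix $r<(|\le_{-}(x)|/C)^{1/\alpha}$, first with $r\leq\delta_{0}$; the aim is to show $\mathcal{W}^{-}_{r}(f^{k}x)\subseteq W^{-}(f^{k}x)$, which forces $R_{-}(f^{k}x)\geq r$, and then to let $r$ increase. For $y$ in the plaque let $r_{m}$ be the intrinsic distance from $f^{k+m}x$ to $f^{m}y$ along $\mathcal{W}^{-}_{loc}(f^{k+m}x)$, as long as the iterates stay in the plaques. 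A mean-value estimate along the plaque, the fact that a vector in a thin cone about $E^{-}(z)$ is stretched by $f$ by at most $\|Df_{-}(z)\|$ up to an error $\eta$ controlled by the cone width, and the H\"older control $\|Df_{-}(z)\|\leq\|Df_{-}(f^{k+m}x)\|+C\,r_{m}^{\alpha}$ when $d(z,f^{k+m}x)\leq r_{m}$, combine to give
\[
r_{m+1}\ \leq\ \bigl(\|Df_{-}(f^{k+m}x)\|+C\,r_{m}^{\alpha}+\eta\bigr)\,r_{m}.
\]
Since $C\,r^{\alpha}+\eta<|\le_{-}(x)|$ once $\eta$ is small, the Cesàro averages of $\log\bigl(\|Df_{-}(f^{j}x)\|+C\,r^{\alpha}+\eta\bigr)$ tend to $\le_{-}(x)+C\,r^{\alpha}+\eta<0$; a Pliss-type bookkeeping (using that $\|Df_{-}\|$ is bounded above and below on $M$) then lets me pick $\delta\leq\delta_{0}$ so that the orbit of the $\delta$-plaque stays in the plaque family, every partial product stays bounded, and $r_{m}\to 0$ exponentially, whence $\mathcal{W}^{-}_{\delta}(f^{k}x)\subseteq W^{-}(f^{k}x)$. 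For $r>\delta_{0}$ I would upgrade by invariance: $W^{-}(x)=f^{-n}\bigl(W^{-}(f^{n}x)\bigr)$, and $f^{-n}$ stretches $E^{-}$ along the orbit by $\prod_{j<n}\|Df_{-}(f^{j}x)\|^{-1}$, of exponential growth $e^{n|\le_{-}(x)|}$, while the H\"older estimate keeps the distortion of $f^{-n}$ on a $\delta_{0}$-disc summable; pulling back a $\delta_{0}$-disc of $W^{-}(f^{n}x)$ then exhibits discs in $W^{-}(x)$ of unbounded intrinsic radius, so $R_{-}(x)=\infty$ in that regime.

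\smallskip
Combining the cases yields $R_{-}(f^{k}x)\geq(|\le_{-}(x)|/C)^{1/\alpha}$ for all $k$, hence \eqref{inequality.LE}. I expect the genuine obstacle to be the passage from the infinitesimal, almost-everywhere contraction to a bound \emph{uniform in $k$}: the Pliss-lemma bookkeeping that controls every partial product, not merely their Cesàro mean, and keeps the contracting plaque from escaping the plaque family at intermediate times. This is presumably also where the $\liminf$ of the Cesàro average in the statement does its work, absorbing the (small) density of times where the uniform control is only approximate. A secondary point to get right is the exact H\"older estimate feeding the recursion for $r_{m+1}$, as it is that step which produces both the constant $C$ and the exponent $1/\alpha$ in the bound.
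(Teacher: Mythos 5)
Your plan is to establish the \emph{pointwise} estimate $R_{\pm}(f^{k}x)\geq\bigl(|\le_{\pm}(x)|/C\bigr)^{1/\alpha}$ for every $k$, and then read off the averaged bound \eqref{inequality.LE} as an afterthought. That pointwise bound is false in general, and this is the central gap. Even for a periodic orbit of period $N>1$, the partial Birkhoff sums $\sum_{j<k}\log\psi_{+}(f^{j}p)$ oscillate around their mean; at a time $k$ where the running sum is low, the recursion $r_{k}=\psi_{+}(f^{k-1}p,r_{k-1})r_{k-1}$ produces a radius below the threshold, and nothing prevents $R_{+}(f^{k}p)$ itself from being that small. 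This is exactly why Corollary~\ref{inequalitymaximalradii} is phrased as a bound on the \emph{average} $\frac{1}{\peri(p)}\sum_{k}R_{+}(f^{k}p)$ and not on $\min_{k}R_{+}(f^{k}p)$. The $\liminf$ of Ces\`aro averages in \eqref{inequality.LE} is not a device for absorbing a small exceptional set of times; it \emph{is} the content of the theorem. The Pliss-type bookkeeping you anticipate cannot repair this, because Pliss's lemma yields a positive (but $<1$) density of hyperbolic times and says nothing about what $R_{\pm}$ does at the remaining times.

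Beyond that, the route you sketch is much heavier than what the statement requires, and heads in a different direction from the paper. The paper does \emph{not} re-derive the stable manifold theorem via plaque families and cone fields. It takes the existence of a nontrivial $W^{+}(x)$ with $R_{+}(x)\geq r_{0}>0$ as input (Theorem~\ref{pesinstablemanifold}), and then runs the elementary Lemma~\ref{lemaures}: pushing a path in $W^{+}_{r}(x)$ forward by $f$ stretches its length by at least $\psi_{+}(x,r)=\min_{B_{r}(x)}m(Df_{+})$, so $R_{+}(f(x))\geq\psi_{+}(x,r)\,r$. Iterating this gives the sequence $r_{k}$, and the H\"older bound $\log\psi_{+}(y,\rho)\geq\log\psi_{+}(y)-C\rho^{\alpha}$ turns the telescoping identity $\frac{1}{N}\log\frac{r_{N}}{r_{0}}=\frac{1}{N}\sum\log\psi_{+}(f^{k}x,r_{k})$ into inequality \eqref{THEinequality}. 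The key Claim~\ref{claim} then shows that $\frac{C}{N}\sum r_{k}^{\alpha}$ cannot remain below $\le_{+}(\mu)-\delta$, because otherwise $r_{N}$ would grow like $e^{\delta N/2}$ and back-propagation through \eqref{inductiveradii} would force the average of $r_{k}^{\alpha}$ to be large anyway. With $r_{k}\leq R_{+}(f^{k}x)$ and concavity of $t\mapsto t^{\alpha}$, \eqref{inequality.LE} follows. No cone-field distortion estimates, no plaque families, no Pliss lemma, and no invariance argument to ``upgrade'' to radii beyond a plaque size.

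Two smaller points. First, in your ``trivial'' case you claim that $\le_{-}(x)>0$ forces $W^{-}(f^{k}x)$ to be a point, hence $R_{-}=\infty$ by an empty-infimum convention; this is delicate and unsupported as written, and in any case the paper simply works under the standing sign hypotheses ($\le_{+}(\mu)>0$, resp.\ $\le_{-}(\mu)<0$). Second, the passage from general invariant $\mu$ to the ergodic Theorem~\ref{boundedradii} goes through the ergodic decomposition and the $f$-invariance of $\le_{\pm}$; you gesture at this correctly, but it deserves to be said once explicitly rather than absorbed into the phrase ``$\le_{-}$ is constant along the orbit.''
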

The variation of $x\mapsto E^\pm_x$ is H\"older, and this cannot be improved by increasing the differentiability of $f$.

 Let $p$ be a hyperbolic periodic point. The {\em ergodic homoclinic class} $\phc(p)$ of $p$ is defined as the intersection of the following two invariant sets:
$$\phc^-(p)=\{x:W^-(x)\transv W^+(o(p))\ne\emptyset\},\quad\text{and}$$
$$\phc^+(p)=\{x:W^+(x)\transv W^-(o(p))\ne\emptyset\},$$
where $o(p)$ denotes the orbit of $p$.\par
These sets were introduced in \cite{HHTU11}.

\section{Some notation and background}

\begin{theorem}[Pesin Stable Manifold Theorem \cite{pesin76}]\label{pesinstablemanifold} Let $f\in\diff^{1+}(M)$. Let $\mu$ be an invariant measure such that $\le_+(\mu)>0$. Then for each  sufficiently small $r>0$, there exists a measurable set $A_r$ with $\mu(A_r)>0$ such that:
$$R_+(x)\geq r\qquad\forall x\in A_r.$$
An analogous statement holds for $R_-(x)$ if $\le_-(\mu)<0$
\end{theorem}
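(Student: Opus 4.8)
The plan is to reduce the statement to Pesin's construction of local unstable manifolds on a positive-measure regular block. First I would apply the Oseledets theorem to the derivative cocycle of $f$ over $(f,\mu)$, which is licit since $\log\|Df^{\pm1}\|$ is bounded ($M$ being compact): as $\le_+(\mu)>0$, on a set of positive measure the Oseledets decomposition has a subbundle $E^u_x$ all of whose exponents are positive, with $\frac1n\log m\big(Df^n(x)|_{E^u_x}\big)\to\chi(x)>0$. Since $\int\chi\,d\mu=\le_+(\mu)>0$, one can fix $\lambda>0$ with $\mu(\{\chi\geq 2\lambda\})>0$; applying Egorov's theorem on that set, together with the analogous backward estimate (Birkhoff averages along the backward orbit have the same limits $\mu$-a.e.), yields $N_0\in\N$ and a set $G$ of positive measure such that
\[
\|Df^{-n}(x)|_{E^u_x}\|\leq e^{-\lambda n}\qquad\text{for all }x\in G,\ n\geq N_0,
\]
and, because $f$ is $C^1$ and $M$ is compact, the finitely many indices $n<N_0$ cost only a fixed constant $C_0=C_0(G)$. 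I would then shrink $G$ once more, using Lusin's theorem on $x\mapsto Df(x)$ and on $x\mapsto E^u_x$, to a block $\Lambda$ of positive measure on which the angle between $E^u$ and its Oseledets complement is bounded below, the exponential charts have a uniform radius $\rho_0$, and the modulus of continuity of $Df$ read in these charts is uniform; it is here that the $C^{1+\alpha}$ hypothesis enters, bounding the nonlinear remainder of $f$ in a chart by $O(\|\cdot\|^{1+\alpha})$ with a constant uniform over $\Lambda$.

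On $\Lambda$ the local unstable manifold is produced by the Hadamard--Perron graph transform. Fix $x\in\Lambda$; for each $n$ let $D_n$ be the graph in the chart at $f^{-n}(x)$ of the zero map over $E^u_{f^{-n}(x)}$, of radius $\rho_0$, and push $D_n$ forward by $f^n$. Forward invariance of the unstable cone field (uniform on $\Lambda$) keeps every iterate a Lipschitz graph over the expanding direction with Lipschitz constant less than $1$; the uniform backward contraction above, combined with the bounded nonlinear remainder, makes the pieces of $f^n(D_n)$ through $x$ a Cauchy sequence of such graphs, whose limit $W^+_{\mathrm{loc}}(x)$ is a $C^{1+\alpha}$ embedded disk through $x$. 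The heart of the quantitative bookkeeping is that its radius is bounded below by a number $r=r(\Lambda)>0$ depending only on $\lambda$, $C_0$, $\rho_0$ and the uniform constants of $\Lambda$, not on $x$; moreover every $y\in W^+_{\mathrm{loc}}(x)$ satisfies $d(f^{-n}(x),f^{-n}(y))\leq C_0e^{-\lambda n}d(x,y)$, so $W^+_{\mathrm{loc}}(x)\subseteq W^+(x)$.

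Finally, a curve $\alpha$ with $\alpha(0)=x$ that stays in $W^+(x)$ cannot reach $\partial W^+(x)$ before leaving the embedded disk $W^+_{\mathrm{loc}}(x)$, hence $\length(\alpha)\geq r$; this gives $R_+(x)\geq r$ for every $x\in\Lambda$, so taking $A_r=\Lambda$ proves the theorem for all $r\leq r(\Lambda)$, which is the meaning of ``sufficiently small $r>0$''. The statement for $R_-$ when $\le_-(\mu)<0$ follows by running the same argument for $f^{-1}$. I expect the main obstacle to be exactly the point glossed over above: the hyperbolicity and angle constants are not uniform along orbits but only tempered, so one must build $\Lambda$ with $\eps$-slowly-varying estimates ($\eps\ll\lambda$) and verify that the graph transform still converges to a disk whose radius is bounded below \emph{independently of} $x\in\Lambda$ --- equivalently, invoke the Lyapunov-chart formalism --- and this uniform lower bound on the radius is the delicate part.
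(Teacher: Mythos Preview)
The paper does not prove this statement at all: it is quoted as the classical Pesin Stable Manifold Theorem with a citation to \cite{pesin76}, and is used only as background input for the results of Section~3 (in particular, to produce the positive-measure set $A_{r_0}$ in the proof of Theorem~\ref{meaasurableradius}). So there is no ``paper's own proof'' to compare against.

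That said, your sketch is a faithful outline of the standard Pesin argument (Oseledets regular set, Egorov/Lusin to extract a block with uniform tempered constants, Lyapunov charts, Hadamard--Perron graph transform, and a uniform lower bound on the local-manifold radius over the block). Your closing caveat is exactly right: the substantive work is showing that the slowly-varying (tempered) nature of the hyperbolicity constants along orbits still yields a radius bounded below \emph{uniformly} on the block; this is precisely what the Lyapunov-chart machinery is designed to deliver. For the purposes of this paper, however, you should simply cite the result rather than reprove it.
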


We will state this classic lemma for later use:
\begin{lemma}[Kac's lemma]\label{kaclemma} Let $f\in\diff(M)$, $\mu$ an ergodic invariant probability measure, $\psi\in L^{1}(\mu)$, $A$ a measurable set with $\mu(A)>0$.
Define 
\begin{equation}\label{kacreturn}
\phi_{A}(x)=\min\{n>1:f^{n}(x)\in A\}.  
\end{equation}

Then
 $$\int\psi d\mu=\int_{A}\sum_{k=0}^{\phi_{A}(x)-1}\psi(f^{k}(x))d\mu.$$
\end{lemma}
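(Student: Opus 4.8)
The plan is to decompose $M$, modulo a $\mu$-null set, as the first-return (Kakutani) tower over the base $A$, with column heights given by $\phi_A$, and then evaluate $\int_M\psi\,d\mu$ by summing $\psi$ over the cells of this tower, each cell being carried onto a subset of $A$ by an iterate of $f$. Two preliminary facts, coming from $f$-invariance and ergodicity, set this up. First, by the Poincar\'e recurrence theorem $\phi_A(x)<\infty$ for $\mu$-almost every $x\in A$; hence the level sets $A_n:=\{x\in A:\phi_A(x)=n\}$, $n\geq1$, partition $A$ up to a $\mu$-null set, and $\mu(A)=\sum_{n\geq1}\mu(A_n)$. Second, $B:=\bigcup_{k\geq0}f^k(A)$ satisfies $f(B)\subseteq B$; since $f$ is invertible and $\mu$ is $f$-invariant we get $\mu(f^{-1}(B))=\mu(B)$, hence $\mu(f^{-1}(B)\setminus B)=0$, so $B$ is $\mu$-a.e.\ invariant and by ergodicity $\mu(B)=1$.

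The next step is to build the tower, whose cells are the sets $f^k(A_n)$ for $n\geq1$ and $0\leq k\leq n-1$, and to check that they partition $B$ modulo $\mu$. For disjointness: if $f^j(a)=f^k(b)$ with $a\in A_n$, $b\in A_m$, $0\leq j\leq n-1$, $0\leq k\leq m-1$, and (by symmetry) $j\leq k$, then applying $f^{-j}$ gives $a=f^{k-j}(b)\in A$; since the first return of $b$ to $A$ occurs at time $m$, the relation $f^{k-j}(b)\in A$ forces $k-j=0$ or $k-j\geq m$, and $k-j\leq k\leq m-1$ excludes the second case, so $k=j$, $a=b$, and (the $A_n$ being pairwise disjoint) $n=m$. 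For covering: discard the $\mu$-null set of points of $A$ with $\phi_A=\infty$ together with its forward orbit (also null by invariance); then any $x\in B$ outside this null set equals $f^k(a)$ with $a\in A$ a returning point and $k$ minimal such that $x\in f^k(A)$, and if the return time $n$ of $a$ satisfied $n\leq k$ we would get $x=f^{k-n}(f^n(a))\in f^{k-n}(A)$ with $k-n<k$, contradicting minimality; hence $k\leq n-1$ and $x\in f^k(A_n)$. (Invertibility of the diffeomorphism $f$ is used both for the inverse iterates and for the injectivity of each $f^k$ on $A_n$.)

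It then remains to integrate. For a single cell, the change of variables $y=f^k(z)$, together with $f$-invariance of $\mu$ and injectivity of $f^k$ on $A_n$, gives $\int_{f^k(A_n)}\psi\,d\mu=\int_{A_n}\psi\circ f^k\,d\mu$. Summing over all cells and using that they cover $M$ modulo $\mu$,
\[
\int_M\psi\,d\mu=\sum_{n\geq1}\sum_{k=0}^{n-1}\int_{A_n}\psi\circ f^k\,d\mu=\sum_{n\geq1}\int_{A_n}\sum_{k=0}^{n-1}\psi\circ f^k\,d\mu=\int_A\sum_{k=0}^{\phi_A(x)-1}\psi(f^k(x))\,d\mu(x),
\]
the last equality because $\phi_A\equiv n$ on $A_n$ and $A=\bigsqcup_{n\geq1}A_n$ modulo $\mu$. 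The exchange of summation and integration is Tonelli's theorem when $\psi\geq0$; for general $\psi\in L^1(\mu)$ one first runs the identity with $|\psi|$ in place of $\psi$, obtaining $\int_A\sum_{k=0}^{\phi_A-1}|\psi\circ f^k|\,d\mu=\int|\psi|\,d\mu<\infty$, which licenses the rearrangement for $\psi$ itself. I do not expect a real obstacle here --- this is the classical Kakutani argument --- and the only point requiring care is the bookkeeping of the tower (its disjointness and, via the ergodicity step above, its exhaustion of $M$ up to a null set), after which the conclusion is just a change of variables combined with Fubini.
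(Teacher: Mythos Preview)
Your proof is correct and is the standard Kakutani skyscraper argument. The paper itself does not prove the lemma at all; it simply points to Sarig's unpublished ergodic theory notes as a reference, so there is no ``paper's own proof'' to compare against beyond that citation. One small remark: the lemma as stated in the paper has $\phi_A(x)=\min\{n>1:f^n(x)\in A\}$, but the identity only holds for the usual first-return time $\min\{n\geq 1:f^n(x)\in A\}$ (take $A=M$, $\psi\equiv 1$ to see the discrepancy), and your tower construction implicitly---and correctly---uses the latter convention.
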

\begin{proof}
 An interesting reference for this lemma is the unpublished notes \cite[Theorem 1.7]{SarigErgodicNotes}.
\end{proof}
The following criterion was introduced in \cite{HHTU11}:
\begin{theorem}[Criterion for ergodicity \cite{HHTU11}]\label{criterion.hhtu}
Let $f\in\diff^{1+}$ be a volume-preserving diffeomorphism and $p$ be a hyperbolic periodic point for $f$. If $m(\phc^-(p))>0$ and $m(\phc^+(p))>0$, then 
$$\phc^+(p)\zeroeq \phc^-(p)\zeroeq \phc(p)$$
is a hyperbolic ergodic component of $m$.
\end{theorem}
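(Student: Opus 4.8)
I would run the Hopf-type argument that underlies such ergodicity criteria, using Pesin theory together with the (Pesin) absolute continuity of the stable and unstable laminations, and invoking Theorem~\ref{pesinstablemanifold} and Lemma~\ref{kaclemma} to cope with the non-uniformity.

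\emph{Structure.} Since $f(W^-(x))=W^-(f(x))$, $f(W^+(x))=W^+(f(x))$ and $f(W^\pm(o(p)))=W^\pm(o(p))$, the sets $\phc^-(p),\phc^+(p),\phc(p)$ are $f$-invariant; measurability follows by exhausting $W^\pm(o(p))$ by compact disks and using the measurable dependence of the Pesin disks on the Pesin sets $A_r$ of Theorem~\ref{pesinstablemanifold}. A transverse intersection $W^-(x)\transv W^+(o(p))$ forces $\dim W^-(x)\ge\dim W^-(o(p))$, and dually on $\phc^+(p)$; since stable plus unstable dimension of a point is at most $\dim M$, on $\phc(p)$ these dimensions must equal those of $p$ and the zero-exponent dimension must vanish. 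Hence every point of $\phc(p)$ is Lyapunov-hyperbolic of the index of $p$, and $m$-a.e.\ such point carries a stable Pesin disk (Theorem~\ref{pesinstablemanifold}) and, symmetrically, an unstable one.

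\emph{Coincidence $\phc^-(p)\zeroeq\phc^+(p)$ (the core).} Let $x\in\phc^-(p)$ with $z\in W^-(x)\cap W^+(o(p))$ a transverse intersection point. The forward orbit of $z$ approaches the orbit of $x$ (since $z\in W^-(x)$) while staying on the immersed manifold $W^+(o(p))$; applying the inclination lemma ($\lambda$-lemma) along the forward orbit of $z$ to a small disk $D\subset W^+(o(p))$ through $z$ transverse to the stable direction, the iterates $f^n(D)$ converge in $C^1$ to the unstable Pesin manifold of the orbit of $x$. So for $m$-a.e.\ such $x$ the manifold $W^+(x)$ exists and is $C^1$-accumulated by $W^+(o(p))$ on arbitrarily large disks; as $W^-(o(p))$ accumulates likewise on nearby stable manifolds, one gets $W^+(x)\transv W^-(o(p))\ne\emptyset$, i.e.\ $x\in\phc^+(p)$, for $m$-a.e.\ $x\in\phc^-(p)$. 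The transfer of the exceptional null set from a transversal through $z$ to the ambient set is made by absolute continuity of the stable Pesin lamination on a Pesin block, summed over blocks, with Lemma~\ref{kaclemma} controlling the frequency of visits to the blocks. The symmetric argument gives the reverse inclusion, whence $\phc^-(p)\zeroeq\phc^+(p)\zeroeq\phc(p)=:\Lambda$, an $f$-invariant positive-measure set saturated mod $0$ by the stable and unstable Pesin manifolds meeting $o(p)$.

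\emph{Ergodicity (Hopf) and conclusion.} For a continuous $\psi:M\to\R$, let $\psi^+,\psi^-$ be its forward and backward Birkhoff averages: they exist and agree $m$-a.e.\ ($m$ is $f$-invariant), with $\psi^+$ constant on stable Pesin manifolds and $\psi^-$ on unstable ones. By absolute continuity of both laminations, the common value $\psi^\ast:=\psi^+=\psi^-$ is $m$-a.e.\ constant along the equivalence relation generated by lying on a common stable or unstable Pesin manifold. On $\Lambda$, $m$-a.e.\ point is joined to $o(p)$ by a finite stable–unstable chain (a stable hop into $W^+(o(p))$, an unstable hop into $W^-(o(p))$, and motion along $o(p)$), hence to any other $m$-a.e.\ point of $\Lambda$; therefore $\psi^\ast$ is $m$-a.e.\ constant on $\Lambda$. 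Letting $\psi$ range over a countable dense family of continuous functions forces $m|_\Lambda$ to be ergodic, and it is hyperbolic by the first step, so $\Lambda=\phc(p)$ is a hyperbolic ergodic component of $m$. The step I expect to be genuinely delicate is the coincidence $\phc^-(p)\zeroeq\phc^+(p)$ — it combines the $\lambda$-lemma with the absolute continuity of \emph{non-uniformly} hyperbolic laminations, uniformly over Pesin blocks, together with the measurability of $\phc^\pm(p)$; the remainder is the classical Hopf bookkeeping.
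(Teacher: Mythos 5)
The paper does not prove Theorem~\ref{criterion.hhtu}: it is imported verbatim from \cite{HHTU11} and used as a black box, so there is no in-paper argument to compare against. Measured against the proof in \cite{HHTU11}, your overall strategy --- nonuniform Hopf argument, $\lambda$-lemma, absolute continuity of Pesin laminations over Pesin blocks --- is the right one, and your structural observations (invariance and measurability of $\phc^\pm(p)$, the dimension count forcing hyperbolicity of index $\operatorname{ind}(p)$ on $\phc(p)$) are correct.

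The gap is exactly in the step you flag as delicate, the coincidence $\phc^-(p)\zeroeq\phc^+(p)$. As written, ``$f^n(D)$ accumulates on $W^+(x)$; as $W^-(o(p))$ accumulates likewise on nearby stable manifolds, one gets $W^+(x)\transv W^-(o(p))\ne\emptyset$'' is a non sequitur: you have not produced any transverse intersection of $W^+(o(p))$ with $W^-(o(p))$ in the region where the inclination lemma is delivering convergence (a transverse homoclinic point of $p$ is not among the hypotheses), so there is nothing for the $C^1$-approximation to inherit. More decisively, the passage never invokes $m(\phc^+(p))>0$, yet the intended conclusion $\phc^-(p)\subset\phc^+(p)$ mod $0$ is false without it; any correct argument must couple the two positive-measure hypotheses. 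A related omission: the existence of a Pesin unstable manifold $W^+(x)$ for $m$-a.e.\ $x\in\phc^-(p)$ does not follow from the dimension count alone --- membership in $\phc^-(p)$ bounds from below only the number of negative exponents and is compatible with zero exponents in the complement, so you must first establish nonuniform hyperbolicity on $\phc^-(p)$ (this too uses $m(\phc^+(p))>0$). The repair, as in \cite{HHTU11}, reverses the order of operations: prove separately that each of $\phc^\pm(p)$ is essentially $s$- and $u$-saturated (one saturation trivial, the other by $\lambda$-lemma plus absolute continuity on Pesin blocks), conclude by Hopf that $m$ restricted to each is an ergodic hyperbolic component, and only then show the two components intersect in positive measure --- the place where both hypotheses act --- forcing them to coincide mod $0$.
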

As a consequence, the Katok's closing lemma \cite{Katok1980} allows us to write the results in the well-known Pesin work \cite{pesin77} as:
\begin{theorem}[Pesin's spectral decomposition theorem]\label{pesin.spectral} Let $f\in\diff^{1+}(M)$ be a volume-preserving diffeomorphism. Let $\nuh(f)$ be the set of points without zero Lyapunov exponents. Then there exists a sequence of hyperbolic periodic points $p_n$ such that 
$$\nuh(f)\zeroeq \bigcup_{n\in\N}\phc(p_n),$$
where $\phc(p_n)$ are hyperbolic ergodic components of the volume measure.\par
If we call $$\Gamma^\pm(p)=\{x: W^\pm(x)\transv W^\mp(p)\ne\emptyset\} $$
and $\Gamma(p)=\Gamma^+(p)\cap \Gamma^-(p)$, then $f(\Gamma(f^k(p_n)))=\Gamma(f^{k+1}(p_n)$), and
$$f^{\peri(p)}|_{\Gamma(p)}\quad\text{is Bernoulli}.$$
\end{theorem}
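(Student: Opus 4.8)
The plan is to deduce the statement from three ingredients that are already in place: Pesin's original spectral decomposition from \cite{pesin77}, Katok's closing lemma \cite{Katok1980}, and the ergodicity criterion of Theorem~\ref{criterion.hhtu}. First I recall what \cite{pesin77} gives directly. Since $f\in\diff^{1+}(M)$ preserves the volume $m$, the set $\nuh(f)$ splits mod $0$ into an at most countable family of $f$-invariant sets $\Lambda_n$ of positive measure on each of which $f$ is ergodic and nonuniformly hyperbolic, and each $\Lambda_n$ splits in turn mod $0$ into finitely many subsets (the Bernoulli, equivalently Kolmogorov, sub-components) that are cyclically permuted by $f$ and on which a suitable power of $f$ is Bernoulli. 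What remains is to attach to each $\Lambda_n$ a hyperbolic periodic point $p_n$ with $\phc(p_n)\zeroeq\Lambda_n$, and then to rephrase the cyclic/Bernoulli structure through the sets $\Gamma^\pm$.

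For the periodic point I fix a density point $x$ of $\Lambda_n$ lying in a Pesin block and apply Katok's closing lemma: one gets a hyperbolic periodic point $p_n$ whose orbit shadows a long regular segment of the orbit of $x$, and from the shadowing one reads off transverse intersections $W^+(x)\transv W^-(o(p_n))\ne\emptyset$ and $W^-(x)\transv W^+(o(p_n))\ne\emptyset$. Since such transverse intersections persist for regular base points in the same Pesin block (continuity of Pesin invariant manifolds there, together with the inclination lemma), both $\phc^+(p_n)$ and $\phc^-(p_n)$ meet $\Lambda_n$ in positive measure; each of these sets is $f$-invariant, so by ergodicity of $f$ on the component $\Lambda_n$ it contains $\Lambda_n$ mod $0$. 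In particular $m(\phc^+(p_n))>0$ and $m(\phc^-(p_n))>0$, hence Theorem~\ref{criterion.hhtu} applies and $\phc(p_n)\zeroeq\phc^+(p_n)\zeroeq\phc^-(p_n)$ is a hyperbolic ergodic component of $m$. Two ergodic components that overlap in positive measure coincide mod $0$, so $\phc(p_n)\zeroeq\Lambda_n$. Running this for every $n$ yields $\nuh(f)\zeroeq\bigcup_n\Lambda_n\zeroeq\bigcup_n\phc(p_n)$, the inclusion $\bigcup_n\phc(p_n)\subseteq\nuh(f)$ mod $0$ being clear since each $\phc(p_n)$ carries a hyperbolic measure.

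The equivariance $f(\Gamma(f^k(p_n)))=\Gamma(f^{k+1}(p_n))$ is immediate from $f(W^\pm(y))=W^\pm(f(y))$ and $f(W^\mp(f^k(p_n)))=W^\mp(f^{k+1}(p_n))$. Since $\Gamma^\pm(f^k(p_n))\subseteq\phc^\pm(p_n)$, the $f$-invariant set $\bigcup_{k=0}^{\peri(p_n)-1}\Gamma(f^k(p_n))$ lies in $\phc(p_n)$; it has positive measure because $\Gamma(p_n)$ does — $W^+(p_n)$ and $W^-(p_n)$ meet transversally at $p_n$ and this persists for regular base points in a Pesin block around $p_n$ — so it equals $\phc(p_n)$ mod $0$. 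Let $K_0$ be the Bernoulli sub-component of $\phc(p_n)$ containing $p_n$, in the sense that $m$-almost every point of $W^-_{\mathrm{loc}}(p_n)$ lies in $K_0$. The key claim is that $\Gamma(p_n)\subseteq K_0$ mod $0$: if $x\in\Gamma(p_n)$, pick $z$ in the transverse intersection $W^+(x)\transv W^-(p_n)$ and $w$ in $W^-(x)\transv W^+(p_n)$; since Pesin stable and unstable sets do not cross Bernoulli sub-components (Hopf argument along invariant manifolds plus absolute continuity of their holonomies), $x,z,w$ and $p_n$ all lie in the same sub-component, namely $K_0$. Granting this, $\Gamma(p_n)$ is an $f^{\peri(p_n)}$-invariant subset of $K_0$ of positive measure; tracking $W^-_{\mathrm{loc}}(p_n)$ around the orbit shows that $\peri(p_n)$ is a multiple of the period of the Bernoulli decomposition, so $f^{\peri(p_n)}|_{K_0}$ is a power of a Bernoulli transformation, in particular ergodic. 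Hence $\Gamma(p_n)\zeroeq K_0$ and $f^{\peri(p_n)}|_{\Gamma(p_n)}$ is Bernoulli, completing the proof.

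I expect the main obstacle to be the key claim just used: that Pesin stable and unstable sets do not cross the Bernoulli sub-components of an ergodic hyperbolic component, so that $\Gamma(p_n)$ is contained in a single sub-component $K_0$. This is a consequence of the Hopf argument along stable and unstable manifolds together with the absolute continuity of the associated holonomies — the same ingredients that make the Bernoulli decomposition well defined in \cite{pesin77} — but spelling it out in the present generality, where the Pesin sets are defined only by the limsup condition, requires some care. A secondary delicate point is the application of Katok's closing lemma: one must verify that a single hyperbolic periodic point can be chosen homoclinically related to $m$-almost every point of a prescribed ergodic component, which again rests on the continuity of Pesin invariant manifolds on Pesin blocks and on ergodicity.
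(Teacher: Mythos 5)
The paper does not actually prove this theorem; it is stated as a reformulation of Pesin's spectral decomposition \cite{pesin77} via Katok's closing lemma \cite{Katok1980} and the ergodicity criterion of Theorem~\ref{criterion.hhtu}, which is exactly the route you take. Your argument is correct, and the two subtleties you honestly flag --- that a single hyperbolic periodic point can be chosen homoclinically related to $m$-almost every point of a prescribed ergodic component, and that Pesin stable/unstable manifolds do not cross Bernoulli sub-components --- are indeed the substantive steps, both standard and handled via continuity of invariant manifolds on Pesin blocks together with absolute continuity of the stable/unstable holonomies.
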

\section{Average internal radius in terms of the Lyapunov exponents}

For each $r>0$, define the sets
$$G^\pm(r)=\{x:R_\pm(x)\geq r\}.$$
Theorem \ref{internal.radius.x} follows immediately from the following theorem.
\begin{theorem}\label{boundedradii}
Let $f\in\diff^{1+}(M)$ have a dominated splitting and let $\mu$ be an ergodic invariant probability measure such that $\le_{+}(\mu)>0$.
Then $\mu$-almost every $x\in M$, for {\em any} choice of initial internal radius $r_{0}(x)>0$ such that $x\in G^{+}(r_{0}(x))$, there is a sequence $r_{k}(x)$ (defined in \eqref{inductiveradii}) that satisfies:
\begin{enumerate}
 \item $f^{k}(x)\in G^{+}(r_{k}(x))$ for all $k\in \N$,
 \item $W^{+}_{r_{k}(x)}(f^{k}(x))\subset f(W^{+}_{r_{k-1}(x)}(f^{k-1}(x)))$ for all $k\in\N$,
 \item $$\le_{+}(\mu)\leq C\liminf_{N\to\infty}\frac{1}{N}\sum_{k=0}^{N-1}r_{k}(x)^{\alpha},$$
\end{enumerate}
where $C,\alpha>0$ are constants such that $\log\psi_{+}$ is $(C,\alpha)$-H\"older.
\end{theorem}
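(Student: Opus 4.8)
The plan is to reduce the three assertions to a single geometric estimate — how $f$ enlarges a local unstable disc — after which (1) and (2) are immediate and (3) is a telescoping computation combined with the Birkhoff ergodic theorem. I would write $\psi_+(x)=m(Df_+(x))$ for the positive continuous function whose logarithm is $(C,\alpha)$-H\"older. Since $\mu$ is ergodic, Birkhoff applied to $\log\psi_+$ gives $\frac1N\sum_{k=0}^{N-1}\log\psi_+(f^k(x))\to\int\log\psi_+\,d\mu=\le_+(\mu)$ at $\mu$-a.e.\ $x$; and, as the conorm is supermultiplicative, $\log m\big(Df_+(f^{N-1}(x))\cdots Df_+(x)\big)\geq\sum_{k=0}^{N-1}\log\psi_+(f^k(x))$, so $\liminf_N\frac1N\log m\big(Df_+(f^{N-1}(x))\cdots Df_+(x)\big)\geq\le_+(\mu)>0$ a.e. Hence every Lyapunov exponent along $E^+$ is positive $\mu$-a.e., so by Theorem \ref{pesinstablemanifold} the set $\{R_+>0\}$ has full measure and there $W^+(x)$ is a bona fide Pesin unstable manifold tangent to $E^+$. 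I let $\Lambda$ be the full-measure set where all of this holds.

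First I would record that along $W^+(y)$ the tangent space at \emph{every} point equals the corresponding $E^+$-fibre: this is the standard consequence of the dominated splitting (there is a continuous, forward-invariant cone field around $E^+$ whose forward iterates of cones collapse onto $E^+$, and each point of $W^+(y)$ is a forward iterate of a point of a local unstable disc, whose tangent lies in that cone). Consequently $Df$ restricted to $T_zW^+$ is exactly $Df_+(z)$, so its inverse has norm $\psi_+(z)^{-1}$. The key estimate is then: for $y\in\Lambda$ and $0<r\leq R_+(y)$,
\[
W^+_{\,r\psi_+(y)e^{-Cr^\alpha}}\big(f(y)\big)\ \subseteq\ f\big(W^+_r(y)\big).
\]
To prove it, given $q$ in the left-hand ball I would choose a curve $\gamma$ in $W^+(f(y))$ from $f(y)$ to $q$ with $\length(\gamma)<r\psi_+(y)e^{-Cr^\alpha}$; then $f^{-1}\!\circ\gamma$ is a curve in $W^+(y)$ from $y$ to $f^{-1}(q)$ whose speed is $\leq\psi_+(f^{-1}(\gamma(t)))^{-1}$ times that of $\gamma$, by the previous sentence. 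As long as $f^{-1}\!\circ\gamma$ stays within internal distance $r$ of $y$, the H\"older bound gives $\psi_+(f^{-1}(\gamma(t)))\geq\psi_+(y)e^{-Cr^\alpha}$, so the length of $f^{-1}\!\circ\gamma$ up to time $t$ is at most $e^{Cr^\alpha}\psi_+(y)^{-1}\length(\gamma|_{[0,t]})<r$; an open--closed continuity argument promotes this to all of $[0,1]$, whence $f^{-1}(q)\in W^+_r(y)$.

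With this in hand, I would fix $x\in\Lambda$ and any $r_0>0$ with $x\in G^+(r_0)$, and define
\begin{equation}\label{inductiveradii}
r_k=r_{k-1}\,\psi_+\big(f^{k-1}(x)\big)\,e^{-Cr_{k-1}^\alpha}.
\end{equation}
If $f^{k-1}(x)\in G^+(r_{k-1})$ then $W^+_{r_{k-1}}(f^{k-1}(x))$ is an embedded local unstable disc and the estimate gives (2), $W^+_{r_k}(f^k(x))\subseteq f(W^+_{r_{k-1}}(f^{k-1}(x)))$; since the right-hand side is an embedded disc contained in $W^+(f^k(x))$ — in particular disjoint from its frontier $\partial W^+(f^k(x))$ — the ball $W^+_{r_k}(f^k(x))$ it contains witnesses $R_+(f^k(x))\geq r_k$, which is (1). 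Both follow for all $k$ by induction, the base case being the hypothesis at $k=0$. For (3), take logs in \eqref{inductiveradii} and telescope:
\[
\log r_N=\log r_0+\sum_{k=0}^{N-1}\log\psi_+(f^k(x))-C\sum_{k=0}^{N-1}r_k^\alpha ,
\]
so $\frac{C}{N}\sum_{k=0}^{N-1}r_k^\alpha=\frac1N\log r_0-\frac1N\log r_N+\frac1N\sum_{k=0}^{N-1}\log\psi_+(f^k(x))$. From \eqref{inductiveradii}, $r_k\leq(\max_M\psi_+)\cdot\sup_{s\geq0}(s\,e^{-Cs^\alpha})<\infty$ for every $k\geq1$, so $(r_N)$ is bounded and $\limsup_N\frac1N\log r_N\leq0$; letting $N\to\infty$ and using $\frac1N\log r_0\to0$ and the Birkhoff convergence yields $C\liminf_N\frac1N\sum_{k=0}^{N-1}r_k^\alpha=\le_+(\mu)-\limsup_N\frac1N\log r_N\geq\le_+(\mu)$, which is (3).

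The step I expect to be the main obstacle is the geometric estimate, and within it two delicate analytic points: that $W^+(x)$ is tangent to $E^+$ everywhere along itself in the merely $C^{1+}$, dominated (a priori non-uniformly hyperbolic) setting — needed so that no spurious angle correction appears and the constant is exactly the H\"older constant of $\log\psi_+$ — and that the open--closed bootstrap for the disc containment can be made rigorous against the definition of $R_\pm$ through $\partial W^\pm$ and the internal metric. Once those are settled, the rest is bookkeeping: the recursion \eqref{inductiveradii}, the uniform bound on $r_k$, the telescoping identity, and one use of the Birkhoff ergodic theorem.
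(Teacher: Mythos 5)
Your proof is correct, and it reaches the same endpoints by a genuinely different and in one place cleaner route. Two differences are worth flagging. First, the recursion: the paper sets $r_{k}=\psi_{+}(f^{k-1}(x),r_{k-1})\,r_{k-1}$ where $\psi_{+}(x,\eps)=\min\{\psi_{+}(y):y\in B_{\eps}(x)\}$, and only afterwards invokes the H\"older bound $\psi_{+}(x,\eps)\geq\psi_{+}(x)e^{-C\eps^{\alpha}}$; you build that bound directly into your recursion $r_{k}=r_{k-1}\psi_{+}(f^{k-1}(x))e^{-Cr_{k-1}^{\alpha}}$. That is a smaller admissible radius, but it buys you something the paper's choice does not give for free: the factor $se^{-Cs^{\alpha}}$ is uniformly bounded in $s$, so $(r_{k})$ is automatically bounded and $\limsup_{N}\frac{1}{N}\log r_{N}\leq 0$ falls out immediately. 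Second, and consequently, your proof of item (3) is a short telescoping identity plus Birkhoff plus this boundedness, whereas the paper proves a Claim by contradiction: assuming the inequality fails for some $n\geq N(x,\delta)$, it derives exponential growth $r_{n}\geq e^{\frac{\delta}{2}n}r_{0}$ and then backward-propagates via $r_{n-k}\geq r_{0}e^{\frac{\delta}{2}n-kL}$ with $L=\max\log\psi_{+}$ to contradict the assumed smallness of $\frac{C}{n}\sum r_{k}^{\alpha}$. Your version avoids the backward estimate and the auxiliary constant $L$ entirely. The geometric lemma is essentially the paper's Lemma \ref{lemaures} — both hinge on $Df|_{TW^{+}}=Df_{+}$ and a length estimate on curves — though you spell out the tangency of $W^{+}$ to $E^{+}$ and run an open--closed argument where the paper's one-line length computation is terser. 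One small caveat: the theorem as printed pins the sequence to ``$r_{k}(x)$ defined in \eqref{inductiveradii}'', which in the paper is the $\psi_{+}(\cdot,\cdot)$-recursion; your sequence is a valid but different choice, so if you were matching the statement literally you would either adopt the paper's recursion and then argue boundedness differently (as the paper does), or note that your $r_{k}$ minorizes theirs so all three conclusions still hold.
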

We denote by $B_\eps(x)$ the closed Riemannian ball of radius $\eps>0$ centered at $x$.

\begin{definition}\label{psi}
 For all $x\in M$, $N\in\N_{0}$, and $ \eps>0$, define:  
\begin{enumerate}
 \item  $\psi^N_{+}(x)=m(Df^N_{+}(x))$, 
 \item $\psi^N_{-}(x)=\|Df^N_{-}(x)\|$.
 \item $\psi^N_{+}(x,\eps)=\min\{\psi^N_{+}(y): y\in B_{\eps}(x)\}$, \\ 
 $\psi^N_{-}(x,\eps)=\max\{\psi^N_{-}(y): y\in B_{\eps}(x)\}.$
\end{enumerate}
When $N=1$, we omit 1 from the notation. 
\end{definition}

Before getting into the proof, let us see the following lemma:
\begin{lemma}\label{lemaures}
 Let $f\in\diff^{1}(M)$ with a dominated splitting. Let $r>0$ be such that $x\in G^{+}(r)$. If $m_{0}=\psi_{+}(x,r)$, then
 $f(x)\in G^+(m_{0}r)$. Moreover, $W_{m_{0}r}^{+}(f(x))\subset f(W^{+}_{r}(x))$. 
\end{lemma}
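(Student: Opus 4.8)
The plan is to transport curves between $W^+(x)$ and $W^+(f(x))$ through $f$ and to control how their lengths change; the only analytic input is that $f$ stretches vectors tangent to the unstable manifolds by at least the conorm $m(Df_+)$, which on $B_r(x)$ is in turn bounded below by $m_0=\psi_+(x,r)$. I would begin with two observations. (a) Since $y\in W^+(x)$ holds precisely when $f(y)\in W^+(f(x))$, the map $f$ restricts to a $C^1$ diffeomorphism $W^+(x)\to W^+(f(x))$ carrying $x$ to $f(x)$; being the restriction of a homeomorphism of $M$, it also carries $\partial W^+(x)$ onto $\partial W^+(f(x))$ and interior onto interior. (b) As a consequence of the dominated splitting \eqref{DS} --- the one place it is used --- $W^+(\cdot)$ is everywhere tangent to $E^+$: a tangent direction with a nonzero $E^-$ component would be expanded, not contracted, under backward iteration. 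Hence for every $C^1$ curve $\sigma$ lying in $W^+(x)$ one has $\|Df(\sigma(t))\,\sigma'(t)\|=\|Df_+(\sigma(t))\,\sigma'(t)\|\ge m(Df_+(\sigma(t)))\,\|\sigma'(t)\|$ at every $t$.

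Combining (b) with $m(Df_+)\ge m_0$ on $B_r(x)$ and the elementary fact that internal distance along $W^+(x)$ dominates ambient distance, I would record the key estimate: for every $C^1$ curve $\sigma$ in $W^+(x)$ whose image lies in $B_r(x)$ --- in particular, any $\sigma$ issuing from $x$ with $\length(\sigma)\le r$, and any $\sigma$ contained in $W^+_r(x)$ --- one has $\length(f\circ\sigma)\ge m_0\,\length(\sigma)$. To deduce $f(x)\in G^+(m_0r)$, take an arbitrary admissible curve $\beta$ for $R_+(f(x))$, that is, $\beta(0)=f(x)$, $\beta(1)\in\partial W^+(f(x))$, $\beta([0,1))\subset W^+(f(x))$. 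Then $\gamma:=f^{-1}\circ\beta$ is admissible for $R_+(x)$ by (a), so $\length(\gamma)\ge R_+(x)\ge r$; choosing $t_0$ with $\length(\gamma|_{[0,t_0]})=r$ and applying the key estimate to $\sigma=\gamma|_{[0,t_0]}$ gives $\length(\beta)\ge\length(f\circ\gamma|_{[0,t_0]})\ge m_0r$. Taking the infimum over $\beta$ yields $R_+(f(x))\ge m_0r$.

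For the second assertion $W^+_{m_0r}(f(x))\subset f(W^+_r(x))$, I would argue symmetrically. Put $D=f(W^+_r(x))$, a closed $C^1$ disk through $f(x)$ inside $W^+(f(x))$ whose frontier there is $f(\partial W^+_r(x))$. Any curve in $W^+(f(x))$ from $f(x)$ to that frontier has an initial sub-arc lying in $D$ and ending on $f(\partial W^+_r(x))$; its $f^{-1}$-image is a curve contained in $W^+_r(x)$ running from $x$ to $\partial W^+_r(x)$, hence of length $\ge r$, so by the key estimate the sub-arc --- and therefore the whole curve --- has length $\ge m_0r$. Consequently a point of $W^+(f(x))$ lying outside $D$ is at internal distance $\ge m_0r$ from $f(x)$, because a joining curve must cross the frontier of $D$; so the open internal $m_0r$-ball lies in $D$, and hence so does the closed one, $D$ being closed and the closed ball being the closure of the open one in the length metric of $W^+(f(x))$. (I take $r<R_+(x)$ here, so that the frontier of $W^+_r(x)$ is its internal sphere of radius $r$; the case $r=R_+(x)$ follows by letting $r'\uparrow r$, using continuity of $r'\mapsto\psi_+(x,r')$.)

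The only conceptually substantial ingredient is observation (b), the tangency of $W^+(x)$ to $E^+$, which is exactly where domination is invoked; everything else is bookkeeping --- between the intrinsic length along $W^+$ and the ambient metric, and between frontiers under the homeomorphism $f$. The step that genuinely needs care is ensuring that the curve segments fed into the estimate $\length(f\circ\sigma)\ge m_0\,\length(\sigma)$ really lie in $B_r(x)$, since $m_0=\psi_+(x,r)$ bounds $m(Df_+)$ from below only there; this is precisely why one truncates at $t_0$ --- respectively, passes to the frontier of $D$ --- instead of applying the estimate to the whole curve at once.
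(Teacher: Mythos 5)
Your proposal is correct and takes essentially the same approach as the paper: the core of both is the length-expansion estimate $\length(f\circ\sigma)\geq m_0\,\length(\sigma)$ for curves in $W^+_r(x)$, derived from the tangency of $W^+$ to $E^+$ and the conorm bound $\psi_+(x,r)=m_0$ on $B_r(x)$. The paper's proof is just far terser --- it records only the displayed integral inequality for a path $\alpha$ from $x$ to $\partial W^+_r(x)$ and asserts the inclusion --- whereas you have filled in the bookkeeping (pullback through $f$, truncation at $t_0$ so the estimate is applied only where it is valid, the frontier-crossing argument, and the edge case $r=R_+(x)$) that the paper leaves implicit.
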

\begin{proof}
It is enough to see that $W^{+}_{m_{0}r}(f(x))\subset f(W^{+}_{r}(x))$. Consider a smooth path $\alpha:[0,1]\to W^{+}_{r}(x)$ such that $\alpha (0)=x$ and $\alpha (1)\in\partial W^{+}_{r}(x)$. Then
$$\length(f\circ\alpha)=\int_{0}^{1}\|Df(\alpha(t))\alpha'(t)\|dt\geq m_{0}\length(\alpha)\geq m_{0}r.$$
\end{proof}
\begin{proof}[ Proof of Theorem \ref{boundedradii}] 
Let $x\in M$ be any point such that $R_{+}(x)>0$. Choose any $r_{0}>0$ such that $r_{0}\leq R_{+}(x)$, and for each $k\in \N$ define inductively:
\begin{equation}\label{inductiveradii}
 r_{k}=\psi_{+}(f^{k-1}(x),r_{k-1})r_{k-1}=m_{k-1}r_{k-1}
\end{equation}
 
 By Lemma \ref{lemaures} $f^{k}(x)\in G^{+}(r_{k})$ for all $k\in \N_{0}$. Since $\log\psi_{+}$ is $(C,\alpha)$-H\"older, we have for all $N\in\N$:
\begin{equation}\label{THEinequality}
\frac{1}{N}\log \frac{r_{N}}{r_{0}}=\frac{1}{N}\sum_{k=0}^{N-1}\log\psi_{+}(f^{k}(x),r_{k})\geq \frac{1}{N}\sum_{k=0}^{N-1}\log\psi_{+}(f^{k}(x))-\frac{C}{N}\sum_{k=0}^{N-1}r_{k}^{\alpha}.
\end{equation}
\begin{claim} \label{claim}For each $\delta>0$ and $\mu$-almost every $x\in M$ there exists $N(x,\delta)\in\N$ such that for all $n\geq N(x,\delta)$
$$\le_{+}(\mu)\leq \delta +\frac{C}{n}\sum_{k=0}^{n-1}r_{k}^{\alpha}$$ 
\end{claim}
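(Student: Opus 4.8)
The plan is to extract the Claim directly from inequality \eqref{THEinequality}, using Birkhoff's ergodic theorem on the average $\frac{1}{N}\sum_{k=0}^{N-1}\log\psi_{+}(f^{k}(x))$ and then absorbing the boundary term $\frac{1}{N}\log(r_{N}/r_{0})$ by a short dichotomy on $N$.

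First I would record two elementary facts. Since the splitting $TM=E^{-}\oplus E^{+}$ is dominated it varies continuously, so $\psi_{+}(x)=m(Df_{+}(x))$ is continuous on the compact manifold $M$; as $f$ is a diffeomorphism, $\log\psi_{+}$ is bounded, hence in $L^{1}(\mu)$, and $\psi_{+}(y)\leq\|Df(y)\|\leq\Lambda:=\max_{z\in M}\|Df(z)\|<\infty$ for every $y$. Because $\mu$ is ergodic, Birkhoff's theorem gives, for $\mu$-a.e.\ $x$,
\[
\lim_{N\to\infty}\frac{1}{N}\sum_{k=0}^{N-1}\log\psi_{+}(f^{k}(x))=\int\log\psi_{+}\,d\mu=\le_{+}(\mu),
\]
the last equality holding because for such $x$ the $\liminf$ defining $\le_{+}$ is a genuine limit and $\mu$ is ergodic. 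Fix such an $x$ and the sequence $r_{k}$ from \eqref{inductiveradii}. Given $\delta>0$, pick $N_{1}=N_{1}(x,\delta)$ so that the above Ces\`aro averages are $\geq\le_{+}(\mu)-\delta/2$ for all $n\geq N_{1}$. Then \eqref{THEinequality} rewrites, for every $n\geq N_{1}$, as
\[
\frac{C}{n}\sum_{k=0}^{n-1}r_{k}^{\alpha}\ \geq\ \le_{+}(\mu)-\frac{\delta}{2}-\frac{1}{n}\log\frac{r_{n}}{r_{0}}.
\]

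Now I run the dichotomy. If $\frac{1}{n}\log(r_{n}/r_{0})\leq\delta/2$, the displayed inequality already yields $\le_{+}(\mu)\leq\delta+\frac{C}{n}\sum_{k=0}^{n-1}r_{k}^{\alpha}$, as wanted. Otherwise $r_{n}>r_{0}\,e^{n\delta/2}$, and by \eqref{inductiveradii} and Definition \ref{psi} we have $r_{n}=\psi_{+}(f^{n-1}(x),r_{n-1})\,r_{n-1}\leq\Lambda\,r_{n-1}$, so that $r_{n-1}\geq(r_{0}/\Lambda)\,e^{n\delta/2}$ and hence
\[
\frac{C}{n}\sum_{k=0}^{n-1}r_{k}^{\alpha}\ \geq\ \frac{C}{n}\,r_{n-1}^{\alpha}\ \geq\ \frac{C}{n}\Bigl(\frac{r_{0}}{\Lambda}\Bigr)^{\alpha}e^{n\alpha\delta/2}.
\]
The right-hand side tends to $+\infty$ as $n\to\infty$; since $\le_{+}(\mu)$ is finite, it exceeds $\le_{+}(\mu)$ for all $n\geq N_{2}$, where $N_{2}=N_{2}(\delta,r_{0})$ is a threshold that does not depend on $x$ in any other way. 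Putting $N(x,\delta)=\max(N_{1},N_{2})$ then covers both cases and proves the Claim; letting $\delta\downarrow0$ afterwards upgrades it to item~(3) of Theorem \ref{boundedradii}.

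The only genuine obstacle is the boundary term $\frac{1}{N}\log(r_{N}/r_{0})$: because the splitting is dominated and $\le_{+}(\mu)>0$, the factors $m_{k}\leq\psi_{+}(f^{k}(x))$ are on average large, so a priori the radii $r_{N}$ could grow exponentially and one cannot simply claim $\frac{1}{N}\log r_{N}\to0$. The dichotomy makes this harmless: exponential growth of $r_{N}$ is precisely what makes the Ces\`aro sum $\frac{C}{n}\sum_{k=0}^{n-1}r_{k}^{\alpha}$ blow up, which only helps the lower bound we want. Everything else is routine bookkeeping of the constants $\Lambda$, $N_{1}$, $N_{2}$.
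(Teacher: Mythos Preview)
Your proof is correct and follows essentially the same route as the paper's: both use Birkhoff's theorem to control $\frac{1}{n}\sum\log\psi_{+}(f^{k}(x))$ within $\delta/2$ of $\le_{+}(\mu)$, and both handle the boundary term $\frac{1}{n}\log(r_{n}/r_{0})$ by observing that if it is large then $r_{n}$ grows exponentially, which forces the Ces\`aro sum $\frac{C}{n}\sum r_{k}^{\alpha}$ to blow up. The only cosmetic differences are that the paper packages this as a proof by contradiction rather than an explicit dichotomy, uses $L=\max\log\psi_{+}$ in place of your $\Lambda=\max\|Df\|$, and bounds all of $r_{n-1},\dots,r_{0}$ from below (via $r_{n-k}\geq r_{0}e^{\frac{\delta}{2}n-kL}$) instead of just $r_{n-1}$; since the resulting geometric series is dominated by its first term, this yields the same conclusion.
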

\begin{proof}[Proof of Claim \ref{claim}]Let $L=\max\{\log\psi_{+}(x):x\in M\}>0$. Consider $N(x,\delta)>0$ such that for all $n\geq N(x,\delta)$
$$\frac{1}{n}\sum_{k=0}^{n-1}\log\psi_{+}(f^{k}(x))>\le_{+}(\mu)-\frac{\delta}{2},\quad\text{and}$$
$$\le_{+}(\mu)\leq \frac{C}{n}\left(r_{0}e^{\frac{\delta}{2} n-L}\right)^{\alpha}.$$
If for some $n\geq N(x,\delta)$ we had 
\begin{equation}\label{novaaser}
\le_{+}(\mu)>\delta+\frac{C}{n}\sum_{k=0}^{n-1}r_{k}^{\alpha},  
\end{equation}

then by our choice of $N(x,\delta)$, we would have 
$$\frac{1}{n}\sum_{k=0}^{n-1}\log\psi_{+}(f^{k}(x))-\frac{C}{n}\sum_{k=0}^{n-1}r_{k}^{\alpha}>\frac{\delta}{2}.$$
Inequality \eqref{THEinequality} then would imply
$$r_{n}\geq e^{\frac{\delta}{2}n}r_{0}.$$
Now, from Formula \eqref{inductiveradii} we would have
$$r_{n-k}=\frac{r_{n}}{m_{n-1}\cdots m_{n-k}}\geq r_{0} e^{\frac{\delta}{2}n-kL}.$$
Then 
$$\sum_{k=0}^{n-1}r_k^\alpha\geq (r_0e^{\frac\delta2n})^\alpha\sum_{k=1}^ne^{-\alpha kL}.$$

Our assumption \eqref{novaaser} then would yield
$$\le_{+}(\mu)>\delta +\frac{C}{n}\left(r_{0}e^{\frac{\delta}{2}n-L}\right)^{\alpha},$$
contradicting our choice of $N(x,\delta)$. This proves the claim.
\end{proof}
The claim implies item (3) and Theorem \ref{boundedradii}.\end{proof}
\begin{remark}
 If $R_{+}(x)=\infty$ for a measurable positive measure set $A\subset M$, it follows from Lemma \ref{lemaures} that $R_{+}(x)=\infty$ on $f(A)$. Since $\mu$ is ergodic, $R_{+}(x)=\infty$ for $\mu$-almost every $x$.    
\end{remark}
 
\begin{theorem}\label{meaasurableradius}
 Let $f\in\diff^{1+}(M)$ be a diffeomorphism admitting a dominated splitting. Let $\mu$ be an ergodic invariant probability measure such that $\le_{+}(\mu)>0$. Then, there exists an $L^{1}(\mu)$ function $r_{+}:M\to(0,\infty)$ such that $x\in G^{+}(r_{+}(x))$ for $\mu$-almost every $x$, and
\begin{equation}\label{integrableradiuseq}
 \le_{+}(\mu)\leq \int\log\frac{r_{+}(x)}{r_{0}}d\mu+ C\int r_{+}(x)^{\alpha}d\mu
\end{equation}
where $C,\alpha>0$ are constants such that $\log\psi_{+}$ is $(C,\alpha)$
-H\"older. \end{theorem}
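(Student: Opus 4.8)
The plan is to turn the orbit-wise bound behind Theorem \ref{boundedradii} into an integral bound, by organizing $M$ into a skyscraper over a Pesin set and invoking Kac's lemma. Since $\le_+(\mu)>0$, Theorem \ref{pesinstablemanifold} gives $r_0>0$ with $\mu(A)>0$, where $A:=G^+(r_0)$. By ergodicity, $M$ is (mod $0$) the Kakutani tower over $A$: for $\mu$-almost every $x$ the index $m(x):=\min\{m\geq 0:f^{-m}(x)\in A\}$ is finite, and $x=f^{m(x)}(y)$ with $y\in A$ and $0\leq m(x)<\phi_A(y)$, where $\phi_A$ is the first-return time (cf.\ Lemma \ref{kaclemma}). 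On the column over $y\in A$ I iterate Lemma \ref{lemaures} starting from the base value $r_0$, obtaining the radii $r_k(y)$ of \eqref{inductiveradii}, and set $r_+(x):=r_{m(x)}(y)$. By Lemma \ref{lemaures} one has $f^k(y)\in G^+(r_k(y))$ together with $W^+_{r_k(y)}(f^ky)\subset f(W^+_{r_{k-1}(y)}(f^{k-1}y))$, so $x\in G^+(r_+(x))$ for $\mu$-almost every $x$.

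For \eqref{integrableradiuseq} I would run the estimate in the proof of Claim \ref{claim} one column at a time. Along the column over $y$ and for each $0\leq k<\phi_A(y)$, the $(C,\alpha)$-H\"older control of $\log\psi_+$ gives
$$\log\psi_+(f^k y)\ \leq\ \log\psi_+(f^k y,r_k(y))+C\,r_k(y)^\alpha\ =\ \log\frac{r_{k+1}(y)}{r_k(y)}+C\,r_k(y)^\alpha,$$
which is \eqref{THEinequality} read term by term. Summing over $0\leq k<\phi_A(y)$ makes the middle terms telescope; integrating over $A$ and applying Kac's lemma with the integrands $\log\psi_+$ (whose integral against $\mu$ equals $\le_+(\mu)$), $r_+^\alpha$, and $\log r_+$ turns the column sums into integrals over $M$, and rearranging yields \eqref{integrableradiuseq}. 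The only extra bookkeeping is the comparison, at the top of each column, of the ``virtual'' radius $r_{\phi_A(y)}(y)$ with the value $r_0$ assigned to $r_+$ at the return point.

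The step that carries the real content is verifying $r_+\in L^1(\mu)$, i.e.\ that $r_+$ is squeezed between two positive constants. For the upper bound one exploits the self-limiting feature of \eqref{inductiveradii}: once $r_k(y)>\operatorname{diam}(M)$ the factor $\psi_+(f^ky,r_k(y))$ equals $m_*:=\min_M m(Df_+)$, so if $m_*\leq 1$ then $r_+\leq e^{L}\operatorname{diam}(M)$ with $L:=\max_M\log\|Df\|$, while if $m_*>1$ then $E^+$ is uniformly expanding, every $W^+(x)$ is a complete noncompact immersed submanifold, hence infinitely long, so $R_+\equiv\infty$ and one is in the situation of the Remark after Theorem \ref{boundedradii}. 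The lower bound, namely keeping $r_+$ bounded away from $0$ without violating $r_+\leq R_+$, is the delicate point and is where the dominated splitting is really used, through the uniform (Hölder) control it gives on the size of the local unstable manifolds at points of positive exponent; this control is also what legitimizes the column-top comparison above and, fed back into the computation, forces $\int\log(r_+/r_0)\,d\mu>-\infty$, closing the argument for \eqref{integrableradiuseq}. Everything else is the same Birkhoff/Kac combination already used in Section 3.
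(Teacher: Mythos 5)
Your construction of $r_+$ and the route to \eqref{integrableradiuseq} are essentially the paper's: build a Kakutani tower over a Pesin set $A=G^+(r_0)$ of positive measure (the paper uses the backward hitting time $\phi(x)=\min\{n>1:f^{-n}(x)\in A_{r_0}\}$, you use the entry level $m(x)$; these are two equivalent descriptions of the same skyscraper), iterate \eqref{inductiveradii} along each column, use the $(C,\alpha)$-H\"older control of $\log\psi_+$ term by term, telescope, and convert column sums to integrals via Kac's lemma. That part matches.

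Where you diverge is the integrability step, and that is where your proposal has a gap. First, a small confusion of aims: for $r_+\in L^1(\mu)$ you only need $r_+>0$ bounded above ($\mu$ is a probability measure), so your upper-bound argument alone already delivers $L^1$; the lower bound on $r_+$ is not what $L^1$ requires. What the lower bound would be needed for is a different issue, namely ensuring $\int\log(r_+/r_0)\,d\mu>-\infty$ so that \eqref{integrableradiuseq} is not vacuous with an infinite right-hand side. On that point you correctly sense there is something to check, but you do not actually prove it: the sentence invoking ``uniform H\"older control on the size of local unstable manifolds at points of positive exponent'' is circular, since bounding the size of $W^+$ from below uniformly on a full-measure set is essentially the content one is trying to establish, and in general $r_+$ constructed this way is \emph{not} bounded away from $0$ (the columns can be arbitrarily long, and $\psi_+$ can be $<1$ along stretches). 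Controlling $\int\log r_+\,d\mu$ really amounts to controlling $\int\phi\,d\mu$ (since $|\log(r_+(x)/r_0)|\le L\,\phi(x)$ with $L=\sup|\log\psi_+|$), and Kac's lemma gives $\int_A\phi_A\,d\mu=1$, not integrability of the hitting time over all of $M$. The paper sidesteps the whole issue by allowing a truncation of $r_+$ if it fails to be in $L^1$; your version trades that for a direct two-sided bound and then leaves the hard side unproved. To close your argument you would either need to supply the missing estimate on $\int\phi\,d\mu$ (or on $\int\log r_+\,d\mu$ directly), or fall back to the paper's truncation device.
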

\begin{proof} Let $A_{r_0}$ be such that $\mu(A_{r_0})>0$, where $A_{r_0}$ is as in Theorem \ref{pesinstablemanifold}. For almost every $x\in M$, 
call $\phi:=\phi_{A_{r_0}}$ the measurable return function defined in \eqref{kacreturn} for the set $A_{r_0}$ and for $f^{-1}$ (do not confuse with $n(x)$). That is, 
$$\phi(x)=\min\{n>1: f^{-n}(x)\in A_{r_0}\}.$$
For all $x\in A_{r_0}$, define $r_{+}(x):=r_0>0$, an internal radius of $W^+(x)$. For all $x\in \phi^{-1}(\N)$, define $r_{+}(x):=r_{\phi(x)}$, where $r_{\phi(x)}$ is the one obtained in the recursive formula \eqref{inductiveradii}, that is:
$$r_{+}(x)=\prod_{k=0}^{\phi(x)-1}\psi_{+}(f^{-k}(x),r_{+}(f^{-k}(x)))r_0.$$
 It is easy to check that $r_{+}(x)$ is a measurable function and $x\in G^{+}(r_{+}(x))$ for $\mu$-almost every $x$. If $r_{+}$ is not in $L^{1}(\mu)$, one can easily take a truncation of $r_{+}$ that is in $L^{1}(\mu)$ and satisfies \eqref{integrableradiuseq} and $x\in G^{+}(r_{+}(x))$. So, assume $r_{+}$ is in $L^{1}(\mu)$. Hence, by Jensen's inequality, $r_{+}$ is in $L^{\alpha}(\mu)$. \par
 Now, by Kac's Lemma (Lemma \ref{kaclemma}), we have:
\begin{eqnarray*}
 \noalign{$\displaystyle{\le_{+}(\mu)-C\int r_{+}(x)^{\alpha}d\mu=}$}
 &=& \int_{\pb}\sum_{k=0}^{\phi(x)-1}[\log\psi_{+}(f^{-k}(x))-Cr_{+}(f^{-k}(x))^{\alpha}]d\mu\\
 &\leq& \int_{\pb}\sum_{k=0}^{\phi(x)-1}\log\psi_{+}(f^{-k}(x),r_{+}(f^{-k}(x))d\mu\\
 &=&\int_{\phi^{-1}(\N)}\log\frac{r_{+}(x)}{r_{0}}d\mu = \int \log\frac{r_{+}(x)}{r_{0}}d\mu
 \end{eqnarray*}
\end{proof}
\begin{remark} As a corollary of Jensen's inequality, under the assumptions of Theorem \ref{meaasurableradius}, we get
$$\le_{+}(\mu)\leq \log \int \frac{r_{+}(x)}{r_{0}}d\mu+C\left(\int {r_{+}(x)}d\mu\right)^{\alpha}.$$
Also, if we choose $$0<r_{0}\leq \left(\frac{\le_{+}(\mu)}{C}\right)^{\frac{1}{\alpha}},$$
then it follows that $\int r_{+}(x) d\mu\geq r_{0}$, otherwise, we would get a contradiction with the inequality above. 
\end{remark}
\begin{corollary}\label{cor.R+}
    Let $\mu$ be an ergodic measure such that $\le_+(\mu)>0$. Then $R_+$ is a measurable function. 
\end{corollary}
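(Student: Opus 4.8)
The plan is to reduce the measurability of $R_+$ to that of a countable family of functions built from the Pesin local unstable manifolds. First I would observe that it suffices to show $\{x:R_+(x)>r\}$ is $\mu$-measurable for every rational $r>0$, since then $R_+(x)=\sup\{r\in\mathbb Q_{>0}:R_+(x)>r\}$. The case $R_+=\infty$ on a positive-measure set is disposed of immediately: by the Remark following the proof of Theorem \ref{boundedradii} it forces $R_+=\infty$ $\mu$-a.e., and a constant function is measurable; so from now on I assume $R_+<\infty$ $\mu$-a.e.

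Next I would construct, for $\mu$-a.e.\ $x$, an exhaustion of the unstable leaf $W^+(x)$ by embedded disks that depend measurably on $x$. Fix $0<r_0\leq(\le_+(\mu)/C)^{1/\alpha}$ and let $A_{r_0}$, with $\mu(A_{r_0})>0$, be the set from Pesin's Theorem \ref{pesinstablemanifold}, on which $W^+_{r_0}(z)$ is a genuine embedded internal $r_0$-disk depending measurably (indeed continuously on $A_{r_0}$) on $z$. By Poincar\'e recurrence for $f^{-1}$ together with ergodicity, for $\mu$-a.e.\ $x$ there are infinitely many $n\ge 0$ with $f^{-n}(x)\in A_{r_0}$; for these I set $D_n(x):=f^n\big(W^+_{r_0}(f^{-n}(x))\big)$, and $D_n(x):=\{x\}$ otherwise, so that $x\mapsto D_n(x)$ is measurable for each fixed $n$ (as a map into compact subsets of $M$ with their Borel structure), being a composition of $f^{\pm n}$ with the measurable map $z\mapsto W^+_{r_0}(z)$. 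Iterating Lemma \ref{lemaures} shows each $D_n(x)$ is an internal ball around $x$ inside $W^+(x)$; and since $y\in W^+(x)$ means $d(f^{-n}x,f^{-n}y)\to 0$ exponentially, standard Pesin theory gives $f^{-n}(y)\in W^+_{r_0}(f^{-n}(x))$, hence $y\in D_n(x)$, for all large return times $n$, so $W^+(x)=\bigcup_n D_n(x)$. Finally, because $\le_+(\mu)>0$, the products $\prod_k\psi_+$ that appear when iterating Lemma \ref{lemaures} tend to $+\infty$ along $\mu$-a.e.\ backward orbit (this is the estimate already carried out in the proof of Theorem \ref{boundedradii}), so the internal radii of the $D_n(x)$ are unbounded and $\{D_n(x)\}$ is cofinal among embedded sub-disks of $W^+(x)$ that contain $x$.

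I would then let $\hat R_n(x)$ be the internal radius of the disk $D_n(x)$ seen from $x$ --- the infimum of lengths of paths inside $D_n(x)$ from $x$ to $\partial D_n(x)$, with $\hat R_n(x):=0$ when $f^{-n}x\notin A_{r_0}$ --- and observe that $\hat R_n$ is measurable because $x\mapsto D_n(x)$ is and the internal radius of an embedded disk from a prescribed interior point is a Borel function of the disk. The heart of the argument is the identity $R_+(x)=\sup_n\hat R_n(x)$, valid $\mu$-a.e. The inequality $\hat R_n(x)\leq R_+(x)$ is the easy half: since $D_n(x)$ is closed in $W^+(x)$, any path from $x$ to $\partial W^+(x)$ through $W^+(x)$ must meet $\partial D_n(x)$ before reaching its endpoint. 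For the reverse inequality I would fix $\eps>0$, note that the pre-compact disk $W^+_{R_+(x)-\eps/2}(x)$ lies inside some $D_N(x)$ of the cofinal family, and conclude, since $W^+_{R_+(x)-\eps}(x)$ sits in its interior, that every path in $D_N(x)$ from $x$ to $\partial D_N(x)$ has length at least $R_+(x)-\eps$. Then $\{x:R_+(x)>r\}=\bigcup_n\{x:\hat R_n(x)>r\}$ is measurable, which finishes the proof.

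I expect the cofinality assertion --- equivalently, the hard half $\sup_n\hat R_n(x)\ge R_+(x)$ rather than just $\leq$ --- to be the main obstacle: it is precisely here that both the dominated splitting (via Lemma \ref{lemaures}, which prevents an internal ball from collapsing faster than the factor $\psi_+$ when pushed forward by $f$) and the hypothesis $\le_+(\mu)>0$ (so that these factors multiply up to $+\infty$ along a.e.\ backward orbit) are used. One also has to check that $x$ is genuinely an interior point of $D_n(x)$ and that $\partial D_n(x)$ cannot block a minimizing path before the leaf itself terminates; both follow from $D_n(x)$ being a compact embedded disk inside $W^+(x)$ centered at $x$. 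The remaining steps --- measurable dependence of Pesin local manifolds on the base point, and Borel measurability of the internal radius of an embedded disk --- are standard.
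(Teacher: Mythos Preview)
Your route and the paper's coincide: both exhibit $R_+$ $\mu$-a.e.\ as a countable supremum of measurable lower bounds coming from forward iterates of a fixed Pesin local unstable disk. The paper does this in two lines by quoting the $L^1$ functions $r_{+}$ already built in Theorem~\ref{meaasurableradius} (the recursive radii of~\eqref{inductiveradii}), one function $r_{+,k}$ for each shift $k\in\Z$ of the base Pesin block, and setting $R_+=\sup_k r_{+,k}$. You instead take the \emph{exact} internal radius $\hat R_n(x)$ of the iterated disk $D_n(x)=f^n(W^+_{r_0}(f^{-n}x))$; since Lemma~\ref{lemaures} gives $D_n(x)\supset W^+_{r_n}(x)$, your $\hat R_n$ dominates the paper's recursive $r_n$ pointwise, and the two supremums agree. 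Your argument is longer but more self-contained --- the exhaustion $W^+(x)=\bigcup_n D_n(x)$ and the cofinality step are spelled out rather than hidden inside Theorem~\ref{meaasurableradius}.

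One phrasing slip worth fixing: having already reduced to the case $R_+(x)<\infty$, you assert that ``the internal radii of the $D_n(x)$ are unbounded'', which is incompatible with the inequality $\hat R_n(x)\le R_+(x)$ you prove immediately afterward. The divergence $\prod_k\psi_+\to\infty$ along backward orbits does not make $\hat R_n(x)$ blow up; what it actually gives (and what your reverse-inequality paragraph actually uses) is that every compact $K\subset W^+(x)$ satisfies $f^{-n}(K)\subset W^+_{r_0}(f^{-n}x)$ for all large return times $n$, hence $K\subset D_n(x)$. So cofinality holds and $\sup_n\hat R_n(x)=R_+(x)$ rather than $\infty$; with ``unbounded'' replaced by ``cofinal in $W^+(x)$'' the argument goes through.
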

\begin{proof}
For $\mu$-almost every $x$ and each $k\in \Z$ there exists $r_{+,k}\in L^1$ such that $r_{+,k}(f^k(x))=R_+(f^k(x))$, and $y\in G^+(r_{+,k}(y))$ $\mu$-almost every $y$.
Take $r=\sup_{k\in\Z}r_{+,k}$. Then $r$ is a measurable function and $r(y)=R_+(y)$ $\mu$-almost every $y$.
\end{proof}
\section{Internal radii for periodic points}
The following corollary follows immediately from Theorem \ref{boundedradii}:
\begin{corollary} \label{inequalitymaximalradii} Under the hypothesis of Theorem \ref{boundedradii}, if $p$ is a periodic point such that $\le_{+}(p)>0$, then
$$\le_{+}(p)\leq \frac{C}{\peri(p)}\sum_{k=0}^{\peri(p)-1}R_{+}(f^{k}(p))^{\alpha}\leq C\left(\frac{1}{\peri(p)}\sum_{k=0}^{\peri(p)-1}R_{+}(f^{k}(p))\right)^{\alpha}.$$
\end{corollary}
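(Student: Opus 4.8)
The plan is to read Corollary \ref{inequalitymaximalradii} off Theorem \ref{boundedradii} by applying the latter to the measure carried by the orbit of $p$. Let
$$\mu_p=\frac{1}{\peri(p)}\sum_{k=0}^{\peri(p)-1}\delta_{f^k(p)}.$$
This is an ergodic $f$-invariant probability measure with $\le_+(\mu_p)=\le_+(p)>0$, so Theorem \ref{boundedradii} applies to it; since $\mu_p$ is atomic and gives every point of $o(p)$ positive mass, the ``$\mu_p$-almost every $x$'' conclusion of Theorem \ref{boundedradii} holds in particular at $x=p$. If $R_+(f^k(p))=\infty$ for some $k$ the asserted inequality is vacuously true, so I assume every $R_+(f^k(p))$ is finite; they are automatically positive, because Theorem \ref{boundedradii} already produces an admissible $r_0>0$ with $p\in G^+(r_0)$ (so $R_+(p)\geq r_0>0$), and then $R_+(f^k(p))>0$ for all $k$ by Lemma \ref{lemaures}.

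First I would invoke Theorem \ref{boundedradii} at $x=p$ with initial internal radius $r_0=R_+(p)$, obtaining a sequence $r_k=r_k(p)$ with $f^k(p)\in G^+(r_k)$ for all $k$, that is $0<r_k\leq R_+(f^k(p))$, hence $r_k^\alpha\leq R_+(f^k(p))^\alpha$. Item (3) of that theorem then gives
$$\le_+(p)\leq C\liminf_{N\to\infty}\frac1N\sum_{k=0}^{N-1}r_k^\alpha\leq C\liminf_{N\to\infty}\frac1N\sum_{k=0}^{N-1}R_+(f^k(p))^\alpha.$$
Since $f^{\peri(p)}(p)=p$, the sequence $k\mapsto R_+(f^k(p))^\alpha$ is $\peri(p)$-periodic, so its Ces\`aro averages converge (the $\liminf$ is actually a limit) to $\frac{1}{\peri(p)}\sum_{k=0}^{\peri(p)-1}R_+(f^k(p))^\alpha$. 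That is exactly the first inequality of the corollary.

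The second inequality I would get from Jensen's inequality for the concave map $t\mapsto t^\alpha$ (here $0<\alpha\leq1$, the relevant range for the H\"older exponent of a dominated splitting) with the uniform weights $\tfrac1{\peri(p)}$ on the $\peri(p)$ orbit points, namely $\frac1{\peri(p)}\sum_{k=0}^{\peri(p)-1}R_+(f^k(p))^\alpha\leq\bigl(\frac1{\peri(p)}\sum_{k=0}^{\peri(p)-1}R_+(f^k(p))\bigr)^\alpha$. I expect no genuine obstacle here; the whole argument is bookkeeping, and the only points worth a sentence are that $p$ is a legitimate $\mu_p$-generic point for Theorem \ref{boundedradii}, that item (1) gives $r_k\leq R_+(f^k(p))$, and that a periodic sequence has a convergent Ces\`aro average, with the degenerate case $R_+\equiv\infty$ on $o(p)$ dispatched separately.
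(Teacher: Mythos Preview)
Your argument is correct, but the paper reaches the same conclusion by a finite one-period computation rather than by invoking the asymptotic item (3) of Theorem \ref{boundedradii}. Starting from $r_0=R_+(p)$ and running the recursion \eqref{inductiveradii} for $N=\peri(p)$ steps, the paper observes that $r_N\leq R_+(f^N(p))=R_+(p)=r_0$, so the left side of \eqref{THEinequality} is $\leq 0$; then, using that $r_k\leq R_+(f^k(p))$ and that $\eps\mapsto\psi_+(x,\eps)$ is nonincreasing, it bounds each $\log m_k$ from below by $\log\psi_+(f^k(p))-CR_+(f^k(p))^\alpha$ and reads off the first inequality directly. Your route --- apply Theorem \ref{boundedradii} to the orbit measure $\mu_p$, bound $r_k^\alpha\leq R_+(f^k(p))^\alpha$, and use periodicity of $R_+\circ f^k$ to identify the Ces\`aro limit --- is a clean black-box deduction; the paper's version is more self-contained and avoids appealing to the ``$\mu$-a.e.'' in Claim \ref{claim}. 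One minor correction: Theorem \ref{boundedradii} does not itself \emph{produce} an admissible $r_0>0$ (its hypothesis is a ``for every admissible $r_0$'' clause), so to justify $R_+(p)>0$ you should instead cite the stable manifold theorem for the hyperbolic periodic point $p$, or apply Theorem \ref{pesinstablemanifold} to $\mu_p$ and propagate with Lemma \ref{lemaures}.
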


\begin{proof}
 Let $r_{0}=R_{+}(p)$ and let us do the inductive procedure in \eqref{inductiveradii}. Then we obtain 
 $$\frac{1}{N}\log\frac{r_{N}}{R_{+}(p)}=\frac{1}{N}\sum_{k=0}^{N-1}\log m_{k}\leq 0,$$
 where $N=\peri(p)$.
 Otherwise we would obtain that $r_{N}>R_{+}(p)$, which is a contradiction. We also have $\psi_{+}(f^{k}(p),R_{+}(f^{k}(p)))\leq m_{k}$ for all $k\in[0,\peri(p)-1]$.\par
Due to the $(C,\alpha)$-H\"olderness of the function $\log\psi_{+}$, the following holds
  $$\log\psi_{+}(f^{k}(p),R_{+}(f^{k}(p)))\geq \log\psi_{+}(f^{k}(p))-CR_{+}(f^{k}(p))^{\alpha}.$$
  The result then follows. 
\end{proof}

\begin{proposition}\label{internalradiusbound} Let $p$ be a hyperbolic periodic point of period $N$, then
 $$R_{+}(p)\geq d(p,M\setminus A^{+}(N)),$$
 
 where $A^{+}(N)=\{x\in M: \log\psi^N_+(x)>0\}$.
\end{proposition}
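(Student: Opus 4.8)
The plan is to pass to the power $g:=f^{\peri(p)}$, so that $p$ becomes a fixed point, and then to apply Lemma~\ref{lemaures} to $g$: writing $\rho:=d(p,M\setminus A^{+}(N))$, on the ball $B_{\rho}(p)$ the differential $Df^{N}$ strictly expands $E^{+}$, so Lemma~\ref{lemaures} strictly increases any internal radius $r<\rho$ of $W^{+}(p)$, and applying this to $r=R_{+}(p)$ forces $R_{+}(p)\ge\rho$. Before the main argument I would record three routine facts. (a)~Replacing $f$ by $g=f^{N}$ (with $N=\peri(p)$) changes neither the sets $W^{\pm}(x)$ nor any of the quantities built from them --- $R_{\pm}$, $G^{\pm}$, and the intrinsic local manifolds $W^{\pm}_{r}(x)$ --- since the conditions $\limsup_{n}\frac1n\log d(g^{\mp n}x,g^{\mp n}y)<0$ and $\limsup_{n}\frac1n\log d(f^{\mp n}x,f^{\mp n}y)<0$ are equivalent: the two time scales differ only by the fixed factor $N$, and $\|Df^{\pm1}\|$ is bounded on the compact manifold $M$. (b)~The map $g$ still admits a dominated splitting, with the same bundles $E^{-}\oplus E^{+}$ and constant $N\gamma$, and the one-step quantities $\psi_{+}(\cdot)$, $\psi_{+}(\cdot,r)$ of Definition~\ref{psi} formed with $g$ coincide with the $N$-step quantities $\psi^{N}_{+}(\cdot)$, $\psi^{N}_{+}(\cdot,r)$ formed with $f$. (c)~The function $\psi^{N}_{+}$ is continuous on $M$ (the bundle $E^{+}$ is continuous under domination, $Df^{N}$ is continuous, and the conorm is continuous on invertible operators), so $A^{+}(N)=\{\psi^{N}_{+}>1\}$ is open.

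If $\rho=0$ the inequality is trivial, since $R_{+}(p)\ge 0$; so assume $\rho>0$, which forces $p\in A^{+}(N)$, i.e.\ $m\bigl(Df^{N}(p)|_{E^{+}_{p}}\bigr)>1$. Iterating $g(p)=p$ then gives $\|Dg^{k}(p)v\|\ge m(Df^{N}_{+}(p))^{k}\|v\|\to\infty$ for every nonzero $v\in E^{+}_{p}$, so $E^{+}_{p}$ lies in the unstable subspace of the hyperbolic linear automorphism $Dg(p)$; hence $\dim W^{+}(p)\ge\dim E^{+}_{p}\ge 1$, $W^{+}(p)$ is a genuine submanifold through $p$, and $R_{+}(p)>0$.

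Now I would argue by contradiction: suppose $R_{+}(p)<\rho$ and put $r_{0}:=R_{+}(p)\in(0,\rho)$. Since $A^{+}(N)$ is open and $r_{0}<\rho=d(p,M\setminus A^{+}(N))$, the closed ball $B_{r_{0}}(p)$ is a compact subset of $A^{+}(N)$, so $m_{0}:=\psi^{N}_{+}(p,r_{0})=\min\{\psi^{N}_{+}(y):y\in B_{r_{0}}(p)\}>1$. Applying Lemma~\ref{lemaures} to $g=f^{N}$ (legitimate by (a) and (b)) at the point $p$, which belongs to $G^{+}(r_{0})$ because $R_{+}(p)=r_{0}$, we obtain $g(p)=p\in G^{+}(m_{0}r_{0})$, i.e.\ $R_{+}(p)\ge m_{0}r_{0}>r_{0}=R_{+}(p)$, a contradiction. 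Hence $R_{+}(p)\ge\rho=d(p,M\setminus A^{+}(N))$.

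The only genuinely delicate point is fact~(a) --- the identification of the $f$- and $f^{N}$-pictures of $W^{\pm}$, and hence of $R_{\pm}$, $G^{\pm}$, $W^{\pm}_{r}$. I would spell it out carefully, using the uniform bounds on $\|Df\|$ and $\|Df^{-1}\|$ to compare a general index $m$ with the nearest multiple of $N$. Everything else is either a direct appeal to Lemma~\ref{lemaures} or elementary.
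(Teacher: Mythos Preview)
Your proof is correct and follows essentially the same route as the paper: apply Lemma~\ref{lemaures} to $g=f^{N}$ at the fixed point $p$ to conclude that $\psi^{N}_{+}(p,R_{+}(p))\le 1$ (otherwise $R_{+}(p)$ would strictly increase), whence the closed ball $B_{R_{+}(p)}(p)$ meets $M\setminus A^{+}(N)$ and $d(p,M\setminus A^{+}(N))\le R_{+}(p)$. The paper's version is terser and does not spell out your preliminary facts (a)--(c), but the core contradiction is identical.
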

\begin{proof} 
It follows from Lemma \ref{lemaures} for $f^{N}$ that $\psi^N_+(p,R_+(p))\leq 1$, for otherwise we would obtain a contradiction with our choice of $R_{+}(p)$. This implies that 
$W^+_{R_{+}(p)}(p)\cap (M\setminus A^{+}(N))\ne\emptyset$. This implies that $d(p,M\setminus A^{+}(N))\leq R_{+}(p)$.\par
\end{proof}
\begin{remark}\label{remark.pper}
\begin{enumerate}
    \item Maybe it is handy to note the following: if $\overline{W^{+}_{\eps}(p)}\subset A^{+}(\peri(p))$, then $R_{+}(p)>\eps$.
\item 
\end{enumerate}
\end{remark}

\section{Time bounds}
\begin{definition}[Pesin blocks] Given $f\in\diff^{1}(M)$ admitting a $\gamma$-dominated splitting, the {\em Pesin blocks} are the sets of the form:
$$\pb^+_N(\gamma)=\left\{x\in M^+: \frac{1}{n}\sum_{k=0}^{n-1}\log \psi_+f^k(x))\geq\gamma/2\quad\forall n\geq N\right\},$$
$$\pb^-_N(\gamma)=\left\{x\in M^-: \frac{1}{n}\sum_{k=0}^{n-1}\log \psi_-(f^k(x))\leq-\gamma/2\quad\forall n\geq N\right\}.$$
\end{definition}
Pesin blocks are closed sets where there is ``uniform hyperbolicity'', but at the cost of not being invariant. 

\begin{proposition}
    For all $x\in\pb^+_N(\gamma)\cap G^+(R_0e^{-K\gamma/4})$, 
    $$\sup_{0\leq k\leq \max(K,N)}R_+(f^k(x))\geq R_0,$$
where $R_0=(\frac{\gamma}{4C})^\frac{1}{\alpha}$.
\end{proposition}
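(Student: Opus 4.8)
The plan is to iterate Lemma~\ref{lemaures} starting from the radius $r_0:=R_0e^{-K\gamma/4}$ provided by the hypothesis $x\in G^+(R_0e^{-K\gamma/4})$, and then to argue by contradiction using the Pesin-block estimate. Starting from $x\in G^+(r_0)$, run the recursion \eqref{inductiveradii}, $r_k=\psi_+(f^{k-1}(x),r_{k-1})\,r_{k-1}$; applying Lemma~\ref{lemaures} repeatedly gives $f^k(x)\in G^+(r_k)$ for every $k\geq 0$, hence $R_+(f^k(x))\geq r_k$. Thus it suffices to exhibit some $k$ with $0\leq k\leq\max(K,N)$ for which $r_k\geq R_0$.

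Write $n:=\max(K,N)$ and suppose, towards a contradiction, that $R_+(f^k(x))<R_0$ for every $0\leq k\leq n$. Then $r_k\leq R_+(f^k(x))<R_0$ for all such $k$, and since the choice of $R_0$ gives $CR_0^\alpha=\gamma/4$ exactly, H\"older continuity of $\log\psi_+$ yields $Cr_k^\alpha<\gamma/4$ for $k=0,\dots,n-1$. Feeding this into inequality~\eqref{THEinequality} with $N$ replaced by $n$ and using that $x\in\pb^+_N(\gamma)$ together with $n\geq N$ (so that $\frac1n\sum_{k=0}^{n-1}\log\psi_+(f^k(x))\geq\gamma/2$), one obtains
\[
\frac1n\log\frac{r_n}{r_0}\ \geq\ \frac1n\sum_{k=0}^{n-1}\log\psi_+(f^k(x))-\frac{C}{n}\sum_{k=0}^{n-1}r_k^\alpha\ \geq\ \frac{\gamma}{2}-\frac{\gamma}{4}\ =\ \frac{\gamma}{4}.
\]
Exponentiating, $r_n\geq r_0e^{n\gamma/4}=R_0e^{(n-K)\gamma/4}\geq R_0$, the last inequality because $n=\max(K,N)\geq K$. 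This contradicts $r_n\leq R_+(f^n(x))<R_0$; and the degenerate case $\max(K,N)=0$ is immediate, since then $r_0=R_0$ and the hypothesis already gives $R_+(x)\geq R_0$.

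I do not expect a real obstacle: the whole argument is bookkeeping on top of Lemma~\ref{lemaures} and the already-established chain \eqref{THEinequality}. The two points deserving care are (i) the choice $n=\max(K,N)$, which is forced by needing $n\geq N$ to invoke the Pesin-block inequality while simultaneously needing $n\geq K$ to absorb the deficit $e^{-K\gamma/4}$ built into $r_0$; and (ii) confirming that the recursion \eqref{inductiveradii} stays well-defined along the finitely many steps $k\leq n$, which holds since a global dominated splitting makes $\psi_+$ and the infima $\psi_+(\cdot,\eps)$ defined on all of $M$.
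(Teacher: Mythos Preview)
Your argument is correct and follows essentially the same route as the paper: assume the supremum is below $R_0$, use the recursion \eqref{inductiveradii} together with inequality \eqref{THEinequality}, the Pesin-block lower bound $\gamma/2$, and the identity $CR_0^\alpha=\gamma/4$ to force growth of rate at least $\gamma/4$, and then invoke $\max(K,N)\geq K$ to reach a contradiction. The only cosmetic difference is that you seed the recursion with $r_0=R_0e^{-K\gamma/4}$ while the paper (implicitly) seeds it with $R_+(x)$; both choices work, and your write-up is in fact more explicit about the role of the recursion and the handling of the endpoint $k=n$.
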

\begin{proof}
    Suppose 
    $$\sup_{0\leq k\leq \max(K,N)}R_+(f^k(x))< R_0$$
and call $T=\max(K,N)$. Then 
$$\frac1T\log\frac{R_+(f^T(x))}{R^+(x)}\geq \frac{\gamma}2-\frac{C}{T}\sum_{k=0}^{T-1}\sup R_+(f^k(x))^\alpha\geq \frac{\gamma}{4}.$$
This implies that
$$R_+(f^T(x))\geq R^+(x)\exp\left(\frac{\gamma}{4}K\right)\geq R_0.$$
This produces a contradiction. 
\end{proof}


\bibliographystyle{alpha}
\bibliography{2023jana}

 \end{document}